\numberwithin{equation}{section}
\newtheorem{thm}[equation]{Theorem}
\newtheorem{prop}[equation]{Proposition}
\newtheorem{lemma}[equation]{Lemma}
\theoremstyle{definition}
\newcommand{\Br}{\mathop{\mathrm{Br}}}
\newcommand{\ind}{\mathop{\mathrm{ind}}}
\newcommand{\cores}{\mathop{\mathrm{cor}}\nolimits}
\newcommand{\cdim}{\mathop{\mathrm{cdim}}\nolimits}
\newcommand{\F}{\mathbb{F}}
\newcommand{\Spec}{\operatorname{Spec}}
\newcommand{\Coprod}{\operatornamewithlimits{\textstyle\coprod}}
\newcommand{\CM}{\operatorname{CM}}
\renewcommand{\phi}{\varphi}
\newcommand{\RatM}{\dashrightarrow}
\newcommand{\WR}{\mathcal{R}}
\title
[Weil transfer of Severi-Brauer varieties]
{Incompressibility of quadratic Weil transfer
\\
of generalized Severi-Brauer varieties}
\keywords
{Algebraic groups,
projective homogeneous varieties,
Chow groups.\\
{\em Mathematical Subject Classification (2010):}
14L17; 14C25}
\author
{Nikita A. Karpenko}
\address
{UPMC Univ Paris 06\\
Institut de Math\'ematiques de Jussieu\\
F-75252 Paris\\
FRANCE}
\address
{{\it Web page:}
{\tt www.math.jussieu.fr/\~{ }karpenko}}
\email {karpenko {\it at} math.jussieu.fr}
\date
{28 October 2009. Revised: 26 September 2010}
\begin{document}

\begin{abstract}
Let $X$ be the variety obtained by the Weil transfer with respect to a quadratic separable field extension of a
generalized Severi-Brauer variety.
We study (and, in some cases, determine) the canonical dimension, incompressibility, and motivic indecomposability
of $X$.
We determine the canonical $2$-dimension of $X$ (in the general case).
\end{abstract}

\maketitle


\section
{Introduction}

The expression {\em canonical dimension} appeared for the first time in
\cite{MR2183253}.
The $p$-local version (where $p$ is a prime), called {\em canonical $p$-dimension}, comes from
\cite{MR2258262}.
They both can be introduced as particular cases of a (formally) older notion of the {\em essential \hbox{($p$-)}dimension}
(although, in fact, the canonical dimension has been implicitly studied for a long time before).
Below, we reproduce the modern definitions of \cite{ASM-ed}.

A connected smooth complete variety $X$ over a field $F$
is called {\em incompressible}, if any rational map $X\RatM X$ is dominant.

In most cases when it is known that a particular variety $X$ is incompressible, it is proved by establishing
that $X$ is {\em $p$-incompressible} for some positive prime integer $p$.
This is a stronger property which says that for any integral variety $X'$, admitting a dominant morphism
to $X$ of degree coprime with $p$, any morphism $X'\to X$ is dominant.

In some interesting cases, $p$-incompressibility of $X$ is, in its turn,  a consequence of a stronger property --
indecomposability of the {\em $p$-motive} of $X$.
Here by the $p$-motive we mean the classical Grothendieck motive of the variety constructed
using the Chow group with coefficients in the finite  field of $p$ elements.

{\em Canonical dimension} $\cdim X$ is a numerical invariant which measures the level of the incompressibility.
It is defined as the least dimension of the image of a rational map $X\RatM X$.
{\em Canonical $p$-dimension}, the $p$-local version, measures the $p$-incompressibility and is the least
dimension of the image of a morphism $X'\to X$, where $X'$ runs over the integral varieties admitting a dominant
morphism to $X$ of a $p$-coprime degree.
We always have the inequalities $\cdim_p X\leq\cdim X\leq\dim X$; the equality $\cdim X=\dim X$ means incompressibility
and the equality $\cdim_p X=\dim X$ means $p$-incompressibility.

Let $D$ be a central division $F$-algebra of degree a power $p^n$ of a prime $p$.
According to \cite{upper}, for any $i=0,\dots,n$, the (generalized Severi-Brauer) variety $X(p^i;D)$
(of the right ideals in $D$ of the reduced dimension $p^i$)
is $p$-incompressible.
Moreover, the $p$-motive of the variety $X(1;D)$ (this is the usual Severi-Brauer variety of $D$) is indecomposable.

In the case of the prime $p=2$,
these results have important consequences for orthogonal involutions on central simple algebras,
cf. \cite{hypernew}.
For a similar study of unitary involutions, one would need similar results on Weil transfers (with respect to separable
quadratic field extensions) of generalized Severi-Brauer varities.
Since a central simple algebra $A$ over a separable quadratic field extension $L/F$ admits a unitary
($F$-linear) involution
if and only if the norm algebra $N_{L/F}A$ is trivial, \cite[Theorem 3.1(2)]{MR1632779},
only the case of a trivial norm algebra is of interest from this viewpoint.

The main result of this paper is the following theorem, where we write
$\WR_{L/F}X$ for the Weil transfer of an $L$-variety $X$.

\begin{thm}
\label{thm1}
Let $F$ be a field, $L/F$ a quadratic separable field extension,
$n$ a non-negative integer,
and $D$ a central division $L$-algebra of degree $2^n$ such that the norm algebra $N_{L/F}D$
is trivial.
For any integer $i\in[0,\;n]$, the variety $\WR_{L/F} X(2^i;D)$ is $2$-incompressible.
\end{thm}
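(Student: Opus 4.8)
The plan is to bound $\cdim_2 Y$ from below by $\dim Y = 2^{i+1}(2^n - 2^i)$, where $Y := \WR_{L/F} X(2^i;D)$; the opposite inequality is automatic. The extreme values $i = n$ and $n = 0$ are trivial, $Y$ being then a point, so assume $n \ge 1$ and $0 \le i < n$. Being the Weil transfer of a projective homogeneous variety, $Y$ is itself projective homogeneous under a connected reductive $F$-group (a form of $\GL_{2^n} \times \GL_{2^n}$; when $N_{L/F}D$ is split this group has a smaller incarnation, a unitary group, since $D$ then carries a unitary involution). In particular $Y$ is geometrically cellular and geometrically split and satisfies the nilpotence principle. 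Over an algebraic closure $Y$ becomes the product $\mathrm{Gr}(2^i, 2^n) \times \mathrm{Gr}(2^i, 2^n)$ of two Grassmannians, while over $L$ it becomes $X(2^i;D) \times X(2^i;{}^\sigma D)$; the hypothesis on the norm algebra enters here, through the resulting isomorphism ${}^\sigma D \simeq D^{\mathrm{op}}$.

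First I would reformulate incompressibility motivically. By the theory of upper motives, $\cdim_2 Y = \dim U$, where $U$ is the upper $2$-motive of $Y$ --- the indecomposable summand of the Chow motive of $Y$ with $\F_2$-coefficients whose geometric realization contains the Tate object in codimension $0$; since $\dim U \le \dim Y$, the assertion is equivalent to $\dim U = \dim Y$. Invoking Rost's nilpotence principle, motivic decompositions of $Y$ are governed by idempotents in the ring $\BCh^{\dim Y}(Y \times Y)$ of rational cycles of the expected codimension, and, using that $Y_{\bar F}$ satisfies Poincar\'e duality, $\dim U = \dim Y$ amounts to the statement that no such idempotent splits the degree-$0$ Tate object into a summand of dimension strictly less than $\dim Y$; equivalently, no rational correspondence on $Y \times Y$ compresses $Y$.

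It therefore remains to control the ring $\BCh^*(Y)$ of rational cycles, together with correspondences. The rational cycles coming from the two Grassmannian factors of $Y_{\bar F}$ are exactly the rational cycles on the single generalized Severi--Brauer variety $X(2^i;D)$ over $L$, and these are given by Karpenko's computation of the reduced Chow groups of generalized Severi--Brauer varieties; beyond them, $\BCh^*(Y)$ contains the class of the twisted diagonal interchanging the two factors, built into the Weil-transfer structure. A Weil-transfer formula for Chow groups then expresses $\BCh^*(Y)$ in terms of these data, and the crux of the argument is to show that this ring is too small to split a summand of dimension $< \dim Y$ off the top. I expect this to be forced by the Steenrod operations: applying a suitable square $\Steen$ to a low-codimension rational class must yield a rational class whose very occurrence rules out a compression, the decisive point being that a certain binomial characteristic number of $\mathrm{Gr}(2^i,2^n) \times \mathrm{Gr}(2^i,2^n)$ is odd. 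This Steenrod-operation computation on the ($D$-twisted) Grassmannian square is the main obstacle; the surrounding reductions, the passage to motives, and the input from the known Chow groups of Severi--Brauer varieties are comparatively routine, and it may be most efficient to organize the argument as an induction on $n$ with the Severi--Brauer base case $i = 0$.
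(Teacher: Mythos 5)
Your opening reductions are correct and consistent with the paper's framework: the passage to the upper $2$-motive $U$, the equivalence of $2$-incompressibility with $\dim U = \dim Y$ (equivalently, $U$ being both upper and lower), the observation that over $L$ the variety becomes $X(2^i;D)\times X(2^i;\sigma D)$ and that $\sigma D\simeq D^{\mathrm{op}}$ because $N_{L/F}D$ is trivial. The paper uses exactly this motivic reformulation and this base change. The problem is what comes next.

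The substance of your proposal is a conjectured Steenrod-operation computation which you never perform and for which you give no evidence beyond an expectation that ``a certain binomial characteristic number is odd.'' This is a genuine gap, not a routine detail: you are proposing a route whose feasibility for the Weil transfer of a \emph{generalized} Severi--Brauer variety is unclear. The Steenrod machinery would have to interact with the descent condition that cuts out $\BCh^*(Y)$ inside the Chow ring of the Grassmannian square, and with the ``twisted diagonal'' coming from the $\sigma$-action; the Cartan formula alone does not tell you which classes over the square descend. Nothing in your sketch shows how the norm-triviality hypothesis or the division hypothesis on $D$ would enter the Steenrod calculation. Without the actual computation, there is no proof.

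For comparison, the paper avoids characteristic classes entirely and argues by induction on $n$ for fixed $i$, with the inductive step carried out over a carefully chosen field: $E=F\big(\WR X(2^{n-1};D)\big)$, over which $D_{EL}$ becomes $M_2(C)$ for a division algebra $C$ of degree $2^{n-1}$. Two decomposition lemmas (\ref{iso deco} and \ref{pro deco}) control the complete motivic decomposition of the upper motive $U_{EL}$ and of $U\otimes U$; the Weil transfer formula $\WR(M\oplus N)\simeq\WR M\oplus\cores(M\otimes\sigma N)\oplus\WR N$ from Lemma~\ref{R(sum)} then expresses $(\WR U)_E$ in terms of $\WR V$ and $\cores(V\otimes V)$, where $V$ is the upper motive of $X(2^i;C)$. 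The induction hypothesis makes any upper summand of $\WR V$ lower, and a Krull--Schmidt chase over $L$, $E$, and $EL$ forces the chosen upper summand $M$ of $\WR X(2^i;D)$ to be lower as well. You do gesture towards induction on $n$ in your last sentence, but you attach it to the Steenrod plan rather than to a generic-splitting-field mechanism, and you give no hint of the functorial Weil-transfer decomposition that the paper's induction actually turns on. So the two approaches diverge precisely at the point where the real work happens, and your version of that work is not done.
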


Moreover, in the case of a usual Severi-Brauer variety we also have the $2$-motivic indecomposability:

\begin{thm}
\label{thm2}
In the settings of Theorem \ref{thm1},
the $2$-motive of the variety $\WR_{L/F}X(1;D)$ is indecomposable.
\end{thm}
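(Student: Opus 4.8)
The plan is to deduce Theorem \ref{thm2} from Theorem \ref{thm1} together with a rigidity/structure argument for the $2$-motive of $\WR_{L/F}X(1;D)$. First I would pass to the case where $D$ is not split over $L$ (otherwise $\WR_{L/F}X(1;D)$ is a projective space and the statement is trivial), and I would base-change to the algebraic closure (or to a splitting field of $D$ over $F$, or of $N_{L/F}D$): over $L$ itself the Weil transfer becomes the product $X(1;D)\times X(1;D^{\mathrm{op}})$, whose $2$-motive is a sum of Tate twists of the motives of the two Severi-Brauer factors. Thus the complete (split) $2$-motive $\BCh(\WR_{L/F}X(1;D))$ is a sum of Tate motives indexed by pairs of integers in a triangular range, and the Poincaré polynomial is the square of that of $\PP^{2^n-1}$.

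Next, the key input: a putative nontrivial direct summand $M$ of the $2$-motive of $X\Dfn\WR_{L/F}X(1;D)$ would, after base change, split as a sum of Tate twists $\Z/2(j)$; in particular it would contain the twist $\Z/2(0)$ (or the top twist, by duality). I would invoke the general principle — available for motives of projective homogeneous varieties and, more relevantly here, for varieties satisfying the nilpotence principle, which holds for $\WR_{L/F}$ of generalized Severi-Brauer varieties by the work on motives of such transfers — that a summand whose split version contains $\Z/2(0)$ must be the ``upper'' motive of $X$, i.e. the indecomposable summand $U(X)$ detecting the $0$-codimensional cycle. The same argument applied to the dual motive shows the complementary summand must be the upper motive of $X$ twisted to the top. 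So indecomposability is equivalent to the statement that the upper motive $U(X)$ already accounts for the full diagonal class, which by the characterization of $U(X)$ amounts to $\cdim_2 X=\dim X$, i.e. to the $2$-incompressibility of $X$ furnished by Theorem \ref{thm1}.

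Concretely, the implementation I would carry out is: (1) record that $X$ satisfies the nilpotence principle, so the category of $2$-motives of $X$ has the Krull--Schmidt property and summands are detected by their image in $\BCh(X)$; (2) show that the $0$-dimensional Chow group $\Ch_0(X)$, equivalently the image $\BCh^{\dim X}(X)$ of the degree map, is ``as small as possible'' precisely because of $2$-incompressibility — more precisely, $2$-incompressibility says the diagonal cycle $[\Delta_X]\in\BCh_{\dim X}(X\times X)$ is not congruent modulo $2$ and modulo lower-dimensional terms to anything supported away from the generic point, which is exactly the obstruction to splitting off the top Tate twist from $U(X)$; (3) conclude that $U(X)$, as a summand of the $2$-motive, has split form containing both $\Z/2(0)$ and $\Z/2(\dim X)$, hence equals the whole motive. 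I would use here the dictionary between upper motives and canonical $2$-dimension from \cite{upper}, applied to $X=\WR_{L/F}X(1;D)$ rather than to a generalized Severi-Brauer variety directly.

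The main obstacle I expect is step (2)–(3): knowing $\cdim_2 X=\dim X$ tells us $U(X)$ is ``full-dimensional'' in the weak sense that its split form has a Tate twist in top degree, but to get genuine indecomposability of the entire motive one must rule out a decomposition $M(X)_{\Z/2}\simeq U(X)\oplus N$ with $N\neq 0$ supported in intermediate degrees. This requires knowing that the upper motive $U(X)$ is the \emph{only} indecomposable summand, equivalently that $X$ is ``$2$-incompressible in a motivic sense'' — and this does not formally follow from $2$-incompressibility alone for an arbitrary variety. So the real work is to show that for $X=\WR_{L/F}X(1;D)$, \emph{every} indecomposable summand of the $2$-motive is a Tate twist of $U(X)$ of a single variety, and then that the only such $U$ that can occur (by an index/dimension count using triviality of $N_{L/F}D$ and the factorization $X_L=X(1;D)\times X(1;D^{\mathrm{op}})$) forces the sum to collapse to a single copy. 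I would handle this by the standard outer/inner structure analysis of motives of Weil transfers (restriction to $L$, Galois descent of idempotents, and the computation of $\End$ of the upper motive), the triviality of $N_{L/F}D$ being exactly what makes the two Severi-Brauer factors ``conjugate'' so that no extra summand can appear.
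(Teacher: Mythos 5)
Your proposal correctly diagnoses the central difficulty, but it does not resolve it. You note that $2$-incompressibility of $X=\WR_{L/F}X(1;D)$ only tells us that the upper indecomposable summand $U(X)$ of the $2$-motive is also lower (contains the Tate motive in degree $\dim X$ over a splitting field), and you correctly observe that this ``does not formally follow from $2$-incompressibility alone for an arbitrary variety'' that the whole motive collapses to $U(X)$. That is exactly right: there could in principle be a direct complement $N\ne 0$ concentrated in intermediate degrees, and Theorem~\ref{thm1} alone is powerless against it. Your plan to rule this out ``by the standard outer/inner structure analysis of motives of Weil transfers (restriction to $L$, Galois descent of idempotents, and the computation of $\End$ of the upper motive)'' is not an argument — there is no such off-the-shelf tool here, and the paper's proof shows that genuinely new work is required. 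In particular, Theorem~\ref{thm2} is \emph{not} deduced from Theorem~\ref{thm1} in the paper; they are proved by separate (parallel) inductions and are logically independent.

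The actual proof of Theorem~\ref{thm2} proceeds by induction on $n$ (where $\deg D=2^n$) and is a concrete comparison of complete motivic decompositions over three fields. Let $E$ be the function field of the intermediate variety $\WR X(2^{n-1};D)$, so that over $EL$ the algebra $D$ becomes $2\times2$ matrices over a division algebra $C$ of degree $2^{n-1}$, and $M(X_{EL})\simeq N\oplus N(2^{n-1})$ with $N=M(X(1;C))$. Using the Weil transfer formula for sums from Lemma~\ref{R(sum)},
$$
M(\WR X)_E\simeq \WR N\oplus\cores\bigl(N\otimes N'\bigr)(2^{n-1})\oplus\WR N(2^n),
$$
and the crucial inputs are (i) $\WR N$ is indecomposable by the induction hypothesis, and (ii) the corestriction functor $\cores$ preserves indecomposability (from \cite{outer}). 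One then tracks, via the Krull--Schmidt principle, which shifts $N(i)$ (for $i\in[0,\;2^n+2^{n-1}-1]$, with multiplicity two in the middle range) must lie inside an upper summand $M$, alternately pulling constraints through $L$ (where $(\WR X)_L\simeq\bigoplus_i M(X)(i)$ because $\sigma D\simeq D^{\mathrm{op}}$, a consequence of $N_{L/F}D$ being trivial) and through $E$, until all the pieces are forced into $M$. None of this appears in your plan; the ingredients you would need — the passage to $E$, the motivic Weil transfer formula for direct sums, the indecomposability of $\cores$, and the induction on $n$ — are precisely the content that cannot be substituted by ``Galois descent of idempotents.'' So the proposal is a correct outline of why the problem is hard, not a proof.
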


As explained above, the choice of settings for Theorems \ref{thm1} and \ref{thm2} is mainly motivated by possible applications to
unitary involutions.
Here are some more arguments motivating this choice.

The assumption that the field extension $L/F$ is separable
(which, in particular, insures that the Weil transfer of a projective variety is again a projective variety)
allows us to use the motivic Weil transfer functor constructed in \cite{MR1809664}.

We do not consider extensions $L/F$ of degree $>2$.
The case of a quadratic extension seems to be the natural case to start with.
Moreover, the answer in the case of $[L:F]>2$ should depend on many initial parameters
(like relations between the conjugate algebras of $D$).
At the same time, a separable extension of degree $>2$ can be non-galois, which makes the picture even more complicated.
One needs a really strong motivation, which we do not have now, to attack such a situation.

Since we stay with quadratic extensions, we do not consider canonical $p$-dimension (and $p$-incompressibility) for odd primes
$p$.
Indeed, since $\cdim_p$ is not changed under finite $p$-coprime field extensions by \cite[Proposition 1.5]{ASM-ed},
the canonical $p$-dimension of the Weil transfer $\WR(X)$ of any $L$-variety $X$ coincides -- if the prime $p$ is odd --
with the canonical $p$-dimension of the $L$-variety $\WR(X)_L$ which is isomorphic to the product of $X$ by its conjugate.
Therefore, the problem of computing the canonical $p$-dimension  (still interesting in some cases and trivial in some others, like in the
trivial norm algebra case, our main case of consideration here) has not much to do with the Weil transfer
anymore and is
a problem concerning canonical $p$-dimension of products.
We refer to \cite{MR2393078} where such a problem is addressed, partially solved, and applied.

Now let $p$ be any prime, $L/F$ an arbitrary finite separable field extension, and $A$ an arbitrary central simple $L$-algebra.
For any generalized Severi-Brauer variety $X$ of $A$, the canonical $p$-dimension of $\WR_{L/F}X$ can be
easily computed in terms of $\cdim_p\WR_{L/F}X(p^i;D)$, where $D$ is a central division $L$-algebra Brauer-equivalent to
the $p$-primary part of $A$ and where $i$ runs over the non-negative integers satisfying $p^i<\deg D$, see Lemma \ref{cdp}.
Therefore, Theorem \ref{thm1} determies $\cdim_p\WR_{L/F}X$ in the case of $p=2=[L:F]$ and of trivial $N_{L/F}A$.

Finally, the assumption on the norm of $D$ made in Theorems \ref{thm1} and \ref{thm2}, can be avoided in the
usual Severi-Brauer variety case.
It turns out that a generic field extension killing the norm algebra $N_{L/F}D$ does not affect the canonical $2$-dimension
of the variety $\WR_{L/F}X(1;D)$, see Proposition \ref{no norm}.
As a consequence, the following three conditions are equivalent:
\begin{enumerate}
\item
the variety $\WR_{L/F}X(1;D)$ is $2$-incompressible;
\item
the $2$-motive of the variety $\WR_{L/F}X(1;D)$ is indecomposable;
\item
the division algebra $D$ remains division over
the function field of the variety $X(1;N_{L/F}D)$.
\end{enumerate}

In Section \ref{Motivic Weil transfer} we discuss the motivic functor given by the Weil transfer.
In Section \ref{Some motivic decompositions} we write down some simple or known motivic
decompositions used afterwards.
In Section \ref{Trivial norm algebras}, both Theorems \ref{thm1} and \ref{thm2} are proved.
Some easy generalizations are considered in Section \ref{Generalizations}.

\medskip
\noindent
{\sl Acknowledgments:}
This work was done during my stay at the Euler Institute, St.Petersburg, in summer 2009;
many thanks to the Institute for the superb working conditions.
The results were reported at the Linear Algebraic Groups and Related Structures workshop held
in September 2009 at the
Banff International Research Station; my gratitude goes to the Station, the organizers of the workshop and to its
participants.

\section
{Motivic Weil transfer}
\label{Motivic Weil transfer}

Let $F$ be a field.
We fix a quadratic separable field extension $L/F$ and
we write $\WR_{L/F}X$ (or simply $\WR X$) for the Weil transfer of an $L$-variety $X$
(see \cite{MR1809664} for the definition and basic properties of $\WR$ as well as for further references on it).
We are working with the category $\CM(F,\Lambda)$ (constructed -- in contrast to \cite{EKM} -- out of smooth
{\em projective}, not just complete, $F$-varieties) of the Chow $F$-motives with coefficients in an associative
unital commutative ring $\Lambda$, \cite[\S64]{EKM}
(we will set $\Lambda=\F_2$, the field of $2$ elements, in the next section).
We recall that the Weil transfer extends to motives giving a (non-additive and not commuting with the Tate shift)
functor $\CM(L,\Lambda)\to\CM(F,\Lambda)$ of the category of $L$-motives into the category of $F$-motives, \cite{MR1809664}.
We write $\cores_{L/F}$ (or simply $\cores$) for the (additive and commuting with the Tate shift)
functor $\CM(L,\Lambda)\to\CM(F,\Lambda)$, studied in
\cite{outer}, associating to the motive of an $L$-variety $X$ the motive of the $F$-variety $\cores X$, which is
is the scheme $X$ considered as an $F$-variety via the composition $X\to\Spec L\to\Spec F$.

Finally, $\sigma$ is the non-trivial automorphism of $L/F$;
$\sigma X$, the {\em conjugate} of $X$,  is the base change of $X$ by $\sigma:\Spec L\to\Spec L$,
and $\sigma:\CM(L,\Lambda)\to\CM(L,\Lambda)$ is the induced motivic (conjugation) functor.

\begin{lemma}
\label{R(sum)}
For any two $L$-motives $M$ and $N$ one has
$$
\WR(M\oplus N)\simeq \WR(M)\oplus\cores(M\otimes \sigma N)\oplus\WR(N).
$$
Besides, $\WR(M\otimes N)\simeq \WR(M)\otimes\WR(N)$ and $\WR(\F_2(i))\simeq\F_2(2i)$ for any integer $i$.
\end{lemma}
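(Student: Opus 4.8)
The plan is to reduce everything to the definition of the Weil transfer on the level of correspondences, exploiting that $\WR$ is defined on motives by applying the variety-level Weil transfer to the (smooth projective) varieties representing the motives and then tracking what happens to the grading, the direct sums, and the tensor products. Concretely, I would first recall that for an $L$-variety $X$ one has a canonical isomorphism $(\WR_{L/F}X)_L \simeq X\times_L \sigma X$, and that under this identification the natural map from $\WR_{L/F}X$ after base change to $L$ is the projection; this is the key geometric input. From it, the three asserted facts will follow by formal manipulations in $\CM(L,\Lambda)$ and $\CM(F,\Lambda)$ together with the properties of $\WR$ established in \cite{MR1809664}.

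For the Tate twist formula $\WR(\F_2(i))\simeq\F_2(2i)$: the Tate motive $\F_2(i)$ over $L$ is (a summand of) the motive of $\PP^{|i|}_L$ shifted appropriately, and $\WR_{L/F}\PP^m_L$ is a variety whose motive over $F$ decomposes with the relevant Tate summand in codimension $2i$; alternatively, and more cleanly, I would argue directly from the multiplicativity $\WR(M\otimes N)\simeq\WR(M)\otimes\WR(N)$ together with the fact that $\WR$ sends the unit motive to the unit motive, reducing to the single case $\WR(\F_2(1))\simeq\F_2(2)$, which one checks by hand using $\WR_{L/F}\PP^1_L$: its base change to $L$ is $\PP^1\times\PP^1$ with conjugation swapping the factors, so the Galois-fixed part of its reduced motive is exactly $\F_2(2)$, and one must verify the class is actually defined over $F$ (no descent obstruction, since we may compute in $\CM(F,\F_2)$ directly via the projective bundle formula for the variety $\WR_{L/F}\PP^1_L$, which is a quadric surface bundle / explicitly a smooth quadric). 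The multiplicativity $\WR(M\otimes N)\simeq\WR(M)\otimes\WR(N)$ itself is a restatement of the compatibility of the variety-level Weil transfer with products, $\WR_{L/F}(X\times_L Y)\simeq \WR_{L/F}X\times_F\WR_{L/F}Y$, upgraded to correspondences; this is recorded in \cite{MR1809664} and I would simply cite it, checking only that the identification respects the ring $\Lambda=\F_2$ of coefficients.

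The main formula, $\WR(M\oplus N)\simeq\WR(M)\oplus\cores(M\otimes\sigma N)\oplus\WR(N)$, is the heart of the lemma and I expect it to be the main obstacle, since $\WR$ is not additive. The strategy is: realize $M$ and $N$ by varieties $X$ and $Y$ (or summands thereof), so that $M\oplus N$ is realized by $X\amalg Y$; then $\WR_{L/F}(X\amalg Y)$ decomposes geometrically, because a $\sigma$-semilinear ``point'' of $X\amalg Y$ over an $F$-scheme is a pair consisting of an $L$-point of one of the two components and a $\sigma$-conjugate point of one of the two — giving four pieces which organize into $\WR_{L/F}X$, $\WR_{L/F}Y$, and two copies of the Weil restriction of scalars applied to the ``mixed'' locus $X\times_L\sigma Y$, the latter two being interchanged by descent and assembling into the single corestricted motive $\cores(X\times_L\sigma Y)$. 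Translating this to motives: one writes $1_{M\oplus N} = 1_M + 1_N$ as orthogonal idempotents, pushes the decomposition of the identity through the (non-linear, but ``polynomial'') functor $\WR$, and identifies the three resulting summands. The delicate points are (a) checking that the two mixed summands really are exchanged by the conjugation descent datum and hence glue to $\cores(M\otimes\sigma N)$ rather than to $\WR$ of something, using the defining property $(\cores Z)_L\simeq Z\oplus\sigma Z$ and matching idempotents; and (b) verifying that the isomorphism is natural enough to pass from varieties to arbitrary motives $M,N$ (summands cut out by idempotents), which follows because all constructions involved — $\WR$, $\cores$, $\otimes$, $\sigma$ — are functorial and the displayed decomposition is stable under the relevant idempotent projectors. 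Once (a) and (b) are in place, the formula drops out.
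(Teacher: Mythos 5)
Your proposal is correct and follows essentially the same route as the paper: reduce the direct-sum formula first to the variety case via the defining $\sigma$-isomorphism $(\WR Z)_L\simeq Z\times\sigma Z$ applied to $Z=X\amalg Y$, identify the four resulting pieces with $\WR X$, $\WR Y$, and a pair of $\sigma$-conjugate summands gluing to $\cores(X\times\sigma Y)$, and then pass to arbitrary motives $(X,[\pi])$, $(Y,[\tau])$ by checking the corresponding identity at the level of algebraic cycles (which, after base change to $L$, becomes the tautology $(\pi+\tau)\cdot\sigma(\pi+\tau)=\pi\cdot\sigma\pi+\pi\cdot\sigma\tau+\tau\cdot\sigma\pi+\tau\cdot\sigma\tau$); the tensor and Tate formulas are simply quoted from \cite[Theorem 5.4]{MR1809664}, so your direct verification of $\WR(\F_2(1))\simeq\F_2(2)$ via $\WR_{L/F}\PP^1_L$ is an unneeded but harmless extra.
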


\begin{proof}
The formulas for a tensor product and for a Tate motive are in \cite[Theorem 5.4]{MR1809664}.
We only have to prove the first formula.

Let us start by assuming that $M$ and $N$ are the motives of some $L$-varieties $X$ and $Y$.
We recall that the $F$-variety $\WR X$ is determined (up to an isomorphism) by the fact that there exists a $\sigma$-isomorphism
(that is, an isomorphism commuting with the action of $\sigma$) of the $L$-varieties $(\WR X)_L$ and $X\times \sigma X$.
At the same time, the $F$-variety $X$, which we denote as $\cores X$, is determined (up to an isomorphism) by existence
of a $\sigma$-isomorphism
of the $L$-varieties $(\cores X)_L$ and $X\coprod \sigma X$.

Since the $L$-varieties
$$
(X\Coprod Y)\times\sigma(X\Coprod Y)\;\;
\text{ and }\;\;
(X\times X)\Coprod(X\times\sigma Y)\Coprod\sigma(X\times\sigma Y)\Coprod(Y\times\sigma Y)
$$
are $\sigma$-isomorphic,
it follows that
$$
\WR(X\Coprod Y)\simeq(\WR X)\Coprod(\cores X\Coprod\sigma Y)\Coprod(\WR Y),
$$
whence the motivic formula.

In the general case, we have $M=(X,[\pi])$ and $(Y,[\tau])$ for some algebraic cycles $\pi$ and $\tau$
($[\pi]$ and $[\tau]$ are their classes modulo rational equivalence).
Using the same letter $\WR$ also for the Weil transfer of algebraic cycles, defined in \cite[\S3]{MR1809664},
as well as for the Weil transfer of their classes, defined in \cite[\S4]{MR1809664},
and processing similarly with the notation $\cores$ and $\sigma$,
we get the following formula to check:
$$
\WR([\pi]+[\tau])=\WR [\pi]+\cores([\pi]+\sigma[\tau])+\WR [\tau].
$$
This formula is easy to check because it holds already on the level of algebraic cycles, that is, with
$[\pi]$ and $[\tau]$ replaced by $\pi$ and $\tau$.
Since the group of algebraic cycles of any $F$-variety injects into the group of algebraic cycles of the same variety considered
over $L$, it suffices to check the latter formula over $L$, where it becomes the trivial relation
\begin{equation*}
(\pi+\tau)\cdot\sigma(\pi+\tau)=\pi\cdot\sigma\pi+\pi\cdot\sigma\tau+\tau\cdot\sigma\pi+\tau\cdot\sigma\tau.
\qedhere
\end{equation*}
\end{proof}

\section
{Some motivic decompositions}
\label{Some motivic decompositions}

Starting from this section, we are working with Chow motives with coefficients in the finite field $\F_2$, that is, we set
$\Lambda=\F_2$ in the notation of the previous section.
Therefore the Krull-Schmidt principle holds for the motives of the projective homogeneous varieties
(see \cite{MR2264459} or \cite{upper}).
This means that any summand of the motive of a projective homogeneous variety possesses a finite direct sum decomposition with
indecomposable summands and such a decomposition (called {\em complete} in this paper) is unique in the usual sense.

First we recall several known facts about the motives of Severi-Brauer varieties.

Let $F$ be a field, $D$ a central simple $F$-algebra, and let $X$ be the Severi-Brauer variety $X(1;D)$ of $D$.
If $D$ is a division algebra, then the motive $M(X)$ of $X$ is indecomposable.
The original proof of this fact is in \cite{MR1356536}, a simpler recent proof can be found in \cite{upper}.

Now, let us assume that $D$ is the algebra of ($2\times2$)-matrices over a central simple $F$-algebra $C$ and set
$Y=X(1;C)$.
Then, according to  \cite{MR1356536}, the  motive of $X$ decomposes in a sum of shifts of the motive of $Y$, namely,
$$
M(X)\simeq M(Y)\oplus M(Y)(\deg C).
$$
(This is, of course, a particular case of a general formula on the case where $D$ is the algebra of ($r\times r$)-matrices over $C$ for some
$r\geq2$.)

Finally, for arbitrary $D$, let $D'$ be one more central simple $F$-algebra (we will have $\deg D'=\deg D$ in the application),
$X':=X(1;D')$, and
assume that the class of $D'$ in the Brauer group $\Br F$ belongs to the subgroup generated by the class of $D$.
Then the projection $X\times X'\to X$ is a projective bundle and therefore
$$
M(X\times X')=M(X)\otimes M(X')\simeq\bigoplus_{i=0}^{\dim X'} M(X)(i)
$$
by the motivic projective bundle theorem, \cite{EKM}.

A known consequence of this decomposition is as follows.
Assume that $D$ and $D'$ are division algebras whose classes generate the same subgroup in $\Br F$
(for instance, $D'$ can be the {\em opposite} algebra of $D$).
Then $M(X\times X')$ is also expressed in terms of $M(X')$ and it follows by the Krull-Schmidt principle that
$M(X)\simeq M(X')$.
A different proof, working in a more general case of a generalized Severi-Brauer variety is given shortly below
(in the last paragraph before Lemma \ref{iso deco}).

Now let us describe the similar results concerning the {\em generalized} Severi-Brauer varieties.
Since we are only interested in the $2$-primary algebras in this paper and for the sake of simplicity, we assume that $D$ is
a central simple $F$-algebra of degree $2^n$ with some $n\geq0$.
Let $X=X(2^i;D)$ with some $i$ satisfying $0\leq i\leq n$.
If $D$ is division, then the variety $X$ is $2$-incompressible (though the motive of $X$ is usually {\em decomposable}
for $i\ne0,1,n$ by \cite{maksim}).
In motivic terms, the $2$-incompressibility of $X$ is expressed as follows: the indecomposable {\em upper} summand $M_X$ of $M(X)$
is {\em lower}.
The adjective {\em upper}, introduced in \cite{upper}, simply means that the $0$-codimensional Chow group of $M_X$ is non-zero.
By the Krull-Schmidt principle, the motive $M_X$ is unique up to an isomorphism.
The adjective {\em lower}, also introduced in \cite{upper},
means that the $d$-dimensional Chow group of $M_X$ is non-zero, where $d=\dim X$.
This notion is dual to the notion of upper: the dual of an upper summand is lower and vice versa.
Therefore, the $2$-incompressibility of $X$ also means that the summand $M_X$ is self-dual.

Now let $D$ and $D'$ be central simple $F$-algebras whose classes in $\Br F$ generate the same subgroup.
Let $M$ and $M'$ be the upper indecomposable motivic summands of the varieties  $X:=X(r;D)$ and $X':=X(r';D')$,
where  $r$ and $r'$ are integers satisfying $0\leq r\leq\deg D$, $0\leq r'\leq\deg D'$, and $\gcd(r,\ind D)=\gcd(r',\ind D')$.
Then $X(F(X'))\ne\emptyset\ne X'(F(X))$, and it follows by \cite{upper} that $M\simeq M'$.

\begin{lemma}
\label{iso deco}
Fix integers $i$ and $n$ satisfying $0\leq i\leq n-1$.
Let $D$ be a central division $F$-algebra of degree $2^n$ and let $U$ be the upper indecomposable motive of the variety $X(2^i;D)$.
Let $K/F$ be a field extension and $C$ a central division $K$-algebra such that $D_K$ is isomorphic to the algebra of
($2\times2$)-matrices over $C$.
For any integer $j$ with $0\leq j\leq n-1$, let $V_j$ be the upper indecomposable motive of the $K$-variety $X(2^j;C)$ and $V:=V_i$.
Then the complete motivic decomposition of  $U_K$ contains $V$ and $V(2^{i+n-1})$, while each of the remaining summands
is a shifts of $V_j$ with some $j<i$.
\end{lemma}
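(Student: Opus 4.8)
The plan is to compare three motivic decompositions: that of $M(X(2^i;D))$, that of $M(X(2^i;D))_K = M(X(2^i;D_K))$, and — via the matrix decomposition recalled above — that of $M(X(2^i;C))$ shifted appropriately. First I would recall from \cite{MR1356536} (the $r\times r$ matrix case quoted in the text) that, since $D_K \simeq M_2(C)$, there is an isomorphism $M(X(2^i;D_K)) \simeq M(X(2^{i};C)) \oplus M(X(2^{i};C))(?)$ is \emph{not} quite what is needed; rather the correct statement for generalized Severi--Brauer varieties is that the flag-type decomposition of $X(2^i;M_2(C))$ produces, in the complete decomposition of its motive, summands built from shifts of the upper motives $V_j$ of the various $X(2^j;C)$ with $j \leq i$. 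The precise combinatorics: over a splitting field the variety $X(2^i;M_2(C))_{\mathrm{split}}$ is a Grassmannian $\mathrm{Gr}(2^i, 2^n)$ and the Bruhat cells organize as expected, so one knows a priori that $U_K$ decomposes into shifts of the $V_j$, $j \leq i$. The content of the lemma is to pin down (a) that the top and bottom of this list are exactly $V = V_i$ in codimension $0$ and $V(2^{i+n-1})$ at the top, and (b) that all other pieces come from strictly smaller $j$.

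For the bookkeeping I would work in the graded Chow group $\Ch^*$ with $\F_2$-coefficients over an algebraic closure, tracking the Poincar\'e polynomial (Betti numbers) of each indecomposable summand. The upper motive $U$ has, in $\Ch^0$, a one-dimensional space, and — since $D$ is division and $X(2^i;D)$ is $2$-incompressible by \cite{upper} (as recalled just before the lemma) — $U$ is also lower, so $\Ch^{\dim}(U) = \F_2$ as well; hence $U_K$ contains a summand whose $\Ch^0$ is nonzero (namely $V$, by the identification of upper motives of $X(2^i;D_K)$ and $X(2^i;C)$ via \cite{upper}, using $\gcd(2^i,\ind D_K)=\gcd(2^i,\ind C)$ since $\ind C = \tfrac12\ind D_K$ and $i \leq n-1$) and dually a summand with nonzero top Chow group. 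The dimension count $\dim X(2^i;D) = 2^i(2^n-2^i)$ versus $\dim X(2^i;C) = 2^i(2^{n-1}-2^i)$ gives the shift: $\dim X(2^i;D) - \dim X(2^i;C) = 2^i\cdot 2^{n-1} = 2^{i+n-1}$, which forces the lower summand of $U_K$ to be exactly $V(2^{i+n-1})$ (it must be a shift of some $V_j$ sitting at the very top, and comparing top-degrees over the algebraic closure singles out $j=i$ and the shift $2^{i+n-1}$). Every remaining summand of $U_K$ is a shift of some $V_j$; I would argue $j < i$ by a rank comparison in low codimension: $\Ch^1(U) $ has the "right" small rank forced by indecomposability and upper-ness, and matching it against the known $\Ch^1$ of the $V_j$'s (whose small-codimension Chow groups grow with $j$) excludes a second copy of $V_i$ or any $V_j$ with $j > i$.

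The main obstacle I anticipate is step (b): ruling out that a summand $V_j$ with $j \geq i$ (other than the two identified copies of $V_i$) appears. The clean way is: over $K$, the motive $M(X(2^i;D_K))$ already decomposes (by the $M_2(C)$-structure and the projective-bundle/Grassmannian-bundle argument of \cite{MR1356536}) into shifts of the $M(X(2^j;C))$ for $j$ in an explicit range, and each such $M(X(2^j;C))$ has upper motive $V_j$ with its \emph{own} complete decomposition into shifts of $V_{j'}$, $j' \leq j$; passing to the upper summand $U$ of $M(X(2^i;D))$ and restricting, $U_K$ is a \emph{single} summand of that big decomposition, and its upper and lower extremes are $V_i$ and $V_i(2^{i+n-1})$ by the dimension/codimension computation above. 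That any $V_j$ with $j=i$ beyond these two, or with $j>i$, cannot occur follows because such a summand would either violate the self-duality of $U$ (wrong codimension for a $V_i$-copy to pair with) or, for $j>i$, would make $\Ch^0(U_K)$ or the low-codimension ranks too large compared to what the upper motive $U$ — pulled back from $F$ — can carry, since $\rk\Ch^c(U)$ for small $c$ is controlled by $\rk\Ch^c(M_{X(2^i;D)})$ which matches $\rk\Ch^c(V_i)$ and not the larger $\rk\Ch^c(V_j)$. I would present this as a finite check on the first few graded pieces together with the duality constraint, deferring the full combinatorial identity to a citation of \cite{upper} and \cite{maksim}.
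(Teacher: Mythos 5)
Your identification of $V$ and $V(2^{i+n-1})$ as summands of $U_K$ via upper-motive comparison and self-duality/dimension count is correct and essentially matches the paper. The genuine gap is in step (b), ruling out extra shifts of $V_j$ with $j\geq i$. The paper gets this by citing a precise structural result (\cite[Corollary 10.19]{MR1758562}): $M(X(2^i;D)_K)$ decomposes into shifts of the motives of the \emph{products} $X(r;C)\times X(2^i\!-\!r;C)$ for $r\in[0,2^i]$, and then applies the theorem from \cite{upper} that each such product decomposes into shifts of $V_j$ with $2^j\mid r$; since $r\in[0,2^i]$, a shift of $V_i$ occurs only for $r=0$ and $r=2^i$, one copy each, and all other $V_j$ in $M(X_K)$ have $j<i$. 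Your sketch instead asserts that $M(X(2^i;D_K))$ decomposes into shifts of $M(X(2^j;C))$ (it does not — the pieces are products, not single generalized Severi--Brauer varieties), and then falls back on a ``rank comparison in low codimension'' to exclude $V_j$ with $j\geq i$. That rank argument is not carried out and there is no reason it works: the Poincar\'e polynomials of the upper motives $V_j$ are not easy to compare, and in particular it is not true that $\rk\Ch^c(V_j)$ for small $c$ grows monotonically with $j$ in a way that would immediately exclude larger $j$. Self-duality alone also does not rule out a third copy of $V_i$ sitting in a middle shift that pairs with itself. Without the cellular/Grassmannian-bundle decomposition into products and the product theorem from \cite{upper}, the exclusion step is unproven. (Minor: the cited identity $\ind C=\tfrac12\ind D_K$ is wrong — $M_2(C)$ is Brauer-equivalent to $C$, so $\ind D_K=\ind C$ — though it does not affect the conclusion you draw from it; and the citation of \cite{maksim} is not relevant to the combinatorics you want to defer to it.)
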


\begin{proof}
Let $X=X(2^i;D)$ and $Y=X(2^i;C)$.
Note that there exist rational maps in both directions between the $K$-varieties
$X_K$ and $Y$.
Therefore the upper indecomposable motivic summands of the varieties $Y$ and $X_K$ are isomorphic.
Since $V$ is upper indecomposable and $U_K$ is upper, $V$ is a summand of $U_K$.
Since $U$ and $V$ are self-dual
(they are self-dual because the varieties $X$ and $Y$ are $2$-incompressible),
we get by dualizing that
$V(-\dim Y)$ is a summand of $U_K(-\dim X)$, that is,
$V(\dim X-\dim Y)$ is a summand of $U_K$.
Since $\dim X=2^i(2^n-2^i)$ and, similarly, $\dim Y=2^i(2^{n-1}-2^i)$,
we have $\dim X-\dim Y=2^{i+n-1}$.

According to \cite[Corollary 10.19]{MR1758562} (more general results of \cite{MR2110630} or of \cite{MR2178658}
can be used here instead),
the motive of $X_K$ decomposes in a sum over the integers $r\in[0,\;2^i]$ of the motives of the products
$X(r;C)\times X(2^i\!-\!r;C)$ with some shifts.
According to \cite{upper}, the complete motivic decomposition of the product
$X(r;C)\times X(2^i\!-\!r;C)$ consists of shifts of the motives $V_j$ with $2^j|r$.
A shift of $V$ appears only two times: one time for $r=0$ and another time for $r=2^i$.
Since the remaining indecomposable summands of $M(X_K)$ are shifts of $V(j)$ with $j<i$, the remaining indecomposable
summands of $U_K$ are also shifts of $V(j)$ with $j<i$.
\end{proof}

\begin{lemma}
\label{pro deco}
For $i$, $n$, $D$, and $U$ as in Lemma \ref{iso deco},
let $U_j$ with $0\leq j\leq n$ be the upper indecomposable motivic summand of $X(2^j;D)$.
The complete decomposition of the tensor product $U\otimes U$ contains $U$ and $U(d)$, where $d=2^i(2^n-2^i)=\dim X(2^i;D)$,
while the remaining summands are
$U(j)$ with some $j\in[1,\;d-1]$ and shifts of $U_j$ with some $j<i$.
\end{lemma}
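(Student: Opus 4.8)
The plan is to mimic the structure of the proof of Lemma \ref{iso deco}, replacing the role of $X_K$ (a generalized Severi-Brauer variety after a scalar extension that halves the index) by the product $X\times X$, where $X=X(2^i;D)$. First I would recall that, by definition, the tensor product $U\otimes U$ is a direct summand of $M(X\times X)=M(X)\otimes M(X)$, so it suffices to understand the complete motivic decomposition of $M(X\times X)$ and then read off which indecomposable summands can actually appear in $U\otimes U$.

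To decompose $M(X\times X)$, I would invoke \cite[Corollary 10.19]{MR1758562} (or the alternatives cited in Lemma \ref{iso deco}): over the function field of one of the factors, $D$ acquires index $2^{n-1}$, and more precisely $M(X\times X)$ decomposes into a sum of shifts of the motives of the products $X(r;D')\times X(2^i-r;D')$ ranging over $r\in[0,2^i]$, where $D'$ is the division algebra Brauer-equivalent to $D$ over the appropriate extension — but here the cleanest route is to stay over $F$ itself and use that $M(X\times X)$ decomposes (by the same corollary applied to the flag variety picture, or directly) into shifts of motives of varieties $X(r;D)\times X(2^i-r;D)$. By \cite{upper}, the complete decomposition of each such product consists of shifts of the motives $U_j$ with $2^j\mid r$ (equivalently $2^j\mid\gcd(r,2^i-r)$). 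The top generalized-Severi-Brauer index $2^i$ is achieved only at the two extreme values $r=0$ and $r=2^i$, each of which contributes a single shift of $U=U_i$; all other values of $r$ force $j<i$. Tracking the two shifts of $U_i$: one comes with shift $0$ (the diagonal/lowest piece, corresponding to $r=0$) and the other with shift $d=\dim X=2^i(2^n-2^i)$ (corresponding to $r=2^i$, the top piece), by the same dimension bookkeeping as in Lemma \ref{iso deco} together with self-duality of $U$ (which holds because $X$ is $2$-incompressible). The remaining shifts of $U$, if any, occur with shift $j\in[1,d-1]$, and the remaining indecomposable summands are shifts of $U_j$ with $j<i$.

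The last point to nail down is that passing from $M(X\times X)$ to its summand $U\otimes U$ does not introduce any new indecomposable summands: every indecomposable summand of $U\otimes U$ is an indecomposable summand of $M(X\times X)$, hence is a shift of some $U_j$ with $j\le i$, and a shift of $U=U_i$ appearing in $U\otimes U$ must be one of the (at most two) shifts of $U_i$ available in $M(X\times X)$. To see that \emph{both} $U$ and $U(d)$ genuinely occur in $U\otimes U$ (and not merely in the complement), I would argue at the level of Chow groups: $U\otimes U$ is upper (its $0$-codimensional Chow group is nonzero since $U$ is upper), forcing the shift-$0$ copy of $U$ to lie in $U\otimes U$; and $U\otimes U$ is self-dual (being a tensor square of a self-dual motive), so dualizing forces the shift-$d$ copy $U(d)$ to lie in $U\otimes U$ as well.

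The main obstacle I expect is the bookkeeping in the decomposition of $M(X\times X)$ over the generic point: making sure that the index-$2^i$ summands occur \emph{exactly} at $r=0$ and $r=2^i$ and with exactly the claimed shifts $0$ and $d$, rather than merely arguing this up to the ambiguity of the complement. This is handled precisely as in Lemma \ref{iso deco} — identify the shift-$0$ copy via uppermostness and the shift-$d$ copy via self-duality and the dimension count $\dim X(r;D)+\dim X(2^i-r;D)$ evaluated at $r=0$ versus $r=2^i$ — so the argument is routine once one is careful, but it is the only place where something could go wrong.
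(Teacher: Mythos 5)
Your proposal follows essentially the same route as the paper: decompose $M(X\times X)$ via \cite{upper} into shifts of $U_j$ with $j\leq i$, force $U$ into $U\otimes U$ by uppermostness and then $U(d)$ by self-duality, and bound the remaining shifts of $U$ to $[1,d-1]$ using that $\Ch^0$ and $\Ch_{2d}$ of $X\times X$ are one-dimensional and already used up by $U$ and $U(d)$.

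Two small remarks. First, your parenthetical claim that ``over the function field of one of the factors, $D$ acquires index $2^{n-1}$'' is a slip (it acquires index $2^i$ over $F(X(2^i;D))$); you abandon this route immediately, so it does no harm, but it shows you were transplanting the setup of Lemma~\ref{iso deco}, where the relevant field is $F(\WR X(2^{n-1};D))$, into a situation where it doesn't apply. Second, the detour through a decomposition of $M(X\times X)$ into shifts of products $X(r;D)\times X(2^i-r;D)$ is avoidable: the paper invokes \cite{upper} directly for the statement that the complete decomposition of $M(X\times X)$ consists of shifts of $U_j$ with $j\leq i$, and also identifies the upper summand of $M(X\times X)$ as $U$ simply by noting there are rational maps in both directions between $X\times X$ and $X$. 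That shortcut also spares you having to verify exactly which shifts the $r=0$ and $r=2^i$ pieces carry; your Chow-group argument ($U\otimes U$ upper and self-dual) already pins $U$ and $U(d)$ down without that bookkeeping.
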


\begin{proof}
Let $X=X(2^i;D)$.
Since there exist rational maps in both directions between the varieties
$X\times X$ and $X$, $U$ is an upper indecomposable summand of $M(X\times X)$.
Since $U\otimes U$ is an upper summand of $M(X\times X)$, it follows that $U$ is a summand of $U\otimes U$.
Dualizing, we get that $U(d)$ is a summand of $U\otimes U$.

According to \cite{upper}, the complete motivic decomposition of $X\times X$ consists of shifts of $U_j$ with
$j\leq i$.
Since the summand $U$ is upper while $U(d)$ is lower, a summand $U(j)$ with some $j$ is present among the remaining summands
of $M(X\times X)$ only if $j\in[1,\;d-1]$.
Since $U\otimes U$ is a summand of $M(X\times X)$, the same statement holds for $U\otimes U$.
\end{proof}

\section
{Proofs of the main theorems}
\label{Trivial norm algebras}

We recall that
we are working with the Chow motives with coefficients in the finite field $\F_2$, that is,
$\Lambda=\F_2$ in the notation of Section \ref{Motivic Weil transfer}.

We start proving Theorem \ref{thm2}:

\begin{proof}[Proof of Theorem \ref{thm2}]
We set $X:=X(1;D)$.

We induct on $n$.
For $n=0$ we have $X=\Spec L$, $\WR X=\Spec F$, and the statement is trivial.
Below we are assuming that $n>0$ and that the statement holds for all fields and all central division algebras
of degree $2^{n-1}$.

Let $M$ be an upper motivic summand of $\WR X$
(the adjective {\em upper} is defined in Section \ref{Some motivic decompositions}).
It suffices to show that $M$ is the whole motive of $\WR X$.

Our proof is illustrated by Figure 1.
An explanation of the illustration is given right after the end of the proof.

We recall that the $L$-variety $(\WR X)_L$ is isomorphic to $X\times \sigma X$,
where $\sigma X$ is the conjugate variety.
Note that $\sigma X=X(1;\sigma D)$, where $\sigma D$ is the conjugate algebra.
Since $(N_{L/F}D)_L\simeq D\otimes_L\sigma D$ and the $F$-algebra $N_{L/F}D$ is trivial,
the $L$-algebra $\sigma D$ is opposite to $D$.
Therefore, 
as explained in Section \ref{Some motivic decompositions},
the complete motivic decomposition of the $L$-variety $(\WR X)_L\simeq X\times \sigma X$ looks as follows
(note that $\dim X= 2^n-1$):
$$
M(\WR X)_L\simeq
\bigoplus_{i=0}^{2^n-1}M(X)(i).
$$
Since the summand $M$ is upper, the complete decomposition of $M_L$ contains a summand isomorphic to $M(X)$.
(The Krull-Schmidt principle is used at this point and will be used several times later on in the proof.)

Let $E$ be the function field of the variety $\WR X(2^{n-1};D)$.
We write $EL$ for the field $E\otimes_F L$.
The $EL$-algebra $D\otimes_FE=D\otimes_L(EL)=D_{EL}$ is isomorphic to the algebra of ($2\times2$)-matrices over a central division
$EL$-algebra $C$ of degree $2^{n-1}$.
By Section \ref{Some motivic decompositions},
the motive of the $EL$-variety $X_{EL}$ decomposes as $N\oplus N(2^{n-1})$, where $N$
is the motive of $X(1;C)$.
Therefore the complete motivic decomposition of the $EL$-variety $(\WR X)_{EL}$ consists of the summands
$N(i)$, where the integer $i$ runs over the interval $[0,\;2^n+2^{n-1}-1]$.
More precisely, there are two copies of $N(i)$ for $i\in[2^{n-1},\;2^n-1]\subset[0,\;2^n+2^{n-1}-1]$
and one copy for $i\in[0,\;2^n+2^{n-1}-1]\setminus[2^{n-1},\;2^n-1]$.

Since $M_L$ contains $M(X)$, $M_{EL}$ contains $N$.

We are working with the fields of the diagram
$$
\xymatrix
@=10pt
{
&EL\ar@{-}[rd]\ar@{-}[ld]&\\
L\ar@{-}[rd]&&E\ar@{-}[ld]\\
&F&
}
$$
Recall  that the motive of the $EL$-variety $X_{EL}$ decomposes as $N\oplus N(2^{n-1})$.
Therefore, by Lemma \ref{R(sum)}, the motive of the $E$-variety $(\WR X)_E= \WR_{EL/E}(X_{EL})$
decomposes as
$$
\WR N\oplus\cores (N\otimes N')(2^{n-1})\oplus\WR N(2^n),
$$
where $\WR=\WR_{EL/E}$, $\cores=\cores_{EL/E}$, and where $N'$ is the conjugate of $N$.

The motive $\WR N$ is indecomposable  by the induction hypothesis.
Since
$$
N\otimes N'\simeq\bigoplus_{i=0}^{2^{n-1}-1}N(i)
$$
(by Section \ref{Some motivic decompositions}) and
the functor $\cores$ preserves indecomposability by \cite{outer},
we get the complete motivic decomposition of
$(\WR X)_E$ replacing the summand $\cores (N\otimes N')(2^{n-1})$ by the sum
$\bigoplus_{i=2^{n-1}}^{2^n-1}\cores N(i)$.

Since $M_{EL}$ contains $N$, $M_E$ contains $\WR N$ and it follows that
$M_{EL}$ contains $N(i)$ for $i=0,1,\dots,2^{n-1}-1$.
Looking at the
complete motivic decomposition of $(\WR X)_L$, we see that
$M_L$ contains $M(X)(i)$ for such $i$ and therefore $M_{EL}$ also contains $N(2^{n-1}+i)$.

It follows that $M_E$ contains $\cores N(i)$ for $i=2^{n-1},\dots,2^n-1$.
Since $(\cores N)_{EL}\simeq N\oplus N'\simeq N\oplus N$
(note that $N'\simeq N$ by Section \ref{Some motivic decompositions}),
$M_{EL}$ contains both copies of $N(i)$ for such $i$.
We conclude that $M_L$ contains $M(X)(i)$ also for $i=2^{n-1},\dots,2^n-1$.
Consequently, $M_L=M(\WR X)_L$.
It follows that $M=M(\WR X)$ and Theorem \ref{thm2} is proved.
\end{proof}

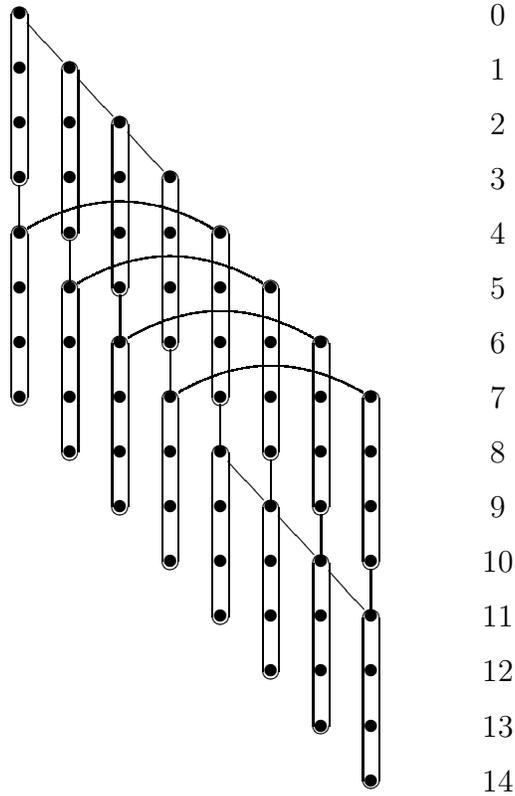
\begin{figure*}
$$
\xymatrix@M=0pt@=13pt{
\bullet&&&&&&&&&&0\\
\bullet&\bullet&&&&&&&&&1\\
\bullet&\bullet&\bullet&&&&&&&&2\\
\bullet&\bullet&\bullet&\bullet&&&&&&&3\\
\bullet&\bullet&\bullet&\bullet&\bullet&&&&&&4\\
\bullet&\bullet&\bullet&\bullet&\bullet&\bullet&&&&&5\\
\bullet&\bullet&\bullet&\bullet&\bullet&\bullet&\bullet&&&&6\\
\bullet&\bullet&\bullet&\bullet&\bullet&\bullet&\bullet&\bullet&&&7\\
&\bullet&\bullet&\bullet&\bullet&\bullet&\bullet&\bullet&&&8\\
&&\bullet&\bullet&\bullet&\bullet&\bullet&\bullet&&&9\\
&&&\bullet&\bullet&\bullet&\bullet&\bullet&&&10\\
&&&&\bullet&\bullet&\bullet&\bullet&&&11\\
&&&&&\bullet&\bullet&\bullet&&&12\\
&&&&&&\bullet&\bullet&&&13\\
&&&&&&&\bullet&&&14
\save "1,1"."4,1"="U1"*\frm<44pt>{-}
\restore
\save "2,2"."5,2"="U2"*\frm<44pt>{-}
\restore
\save "3,3"."6,3"="U3"*\frm<44pt>{-}
\restore
\save "4,4"."7,4"="U4"*\frm<44pt>{-}
\restore
\save "5,5"."8,5"="U5"*\frm<44pt>{-}
\restore
\save "6,6"."9,6"="U6"*\frm<44pt>{-}
\restore
\save "7,7"."10,7"="U7"*\frm<44pt>{-}
\restore
\save "8,8"."11,8"="U8"*\frm<44pt>{-}
\restore
\save "5,1"."8,1"="L1"*\frm<44pt>{-}
\restore
\save "6,2"."9,2"="L2"*\frm<44pt>{-}
\restore
\save "7,3"."10,3"="L3"*\frm<44pt>{-}
\restore
\save "8,4"."11,4"="L4"*\frm<44pt>{-}
\restore
\save "9,5"."12,5"="L5"*\frm<44pt>{-}
\restore
\save "10,6"."13,6"="L6"*\frm<44pt>{-}
\restore
\save "11,7"."14,7"="L7"*\frm<44pt>{-}
\restore
\save "12,8"."15,8"="L8"*\frm<44pt>{-}
\restore
\ar@{-}"L1";"U1"
\ar@{-}"L2";"U2"
\ar@{-}"L3";"U3"
\ar@{-}"L4";"U4"
\ar@{-}"L5";"U5"
\ar@{-}"L6";"U6"
\ar@{-}"L7";"U7"
\ar@{-}"L8";"U8"
\ar@{-}"U1";"U2"
\ar@{-}"U2";"U3"
\ar@{-}"U3";"U4"
\ar@{-}"L5";"L6"
\ar@{-}"L6";"L7"
\ar@{-}"L7";"L8"
\ar@/^1pc/@{-}"L1";"U5"
\ar@/^1pc/@{-}"L2";"U6"
\ar@/^1pc/@{-}"L3";"U7"
\ar@/^1pc/@{-}"L4";"U8"
}
$$
\caption{Proof of Theorem \ref{thm2}}
\end{figure*}

Figure 1 illustrates the proof, just finished, in the case of $n=3$.
The ovals represent the summands of a complete motivic decomposition of the variety
$(\WR X)_{EL}$.
Note that two {\em different} complete motivic decompositions of $(\WR X)_{EL}$ have been used in the proof:
one is a refinement of a complete motivic decomposition of $(\WR X)_L$, the other of $(\WR X)_E$.
But because of the Krull-Schmidt principle,
the sets of summands of these two decompositions can be identified in such a way that the identified summands are isomorphic.
All the summands are shifts of $N$: there is one copy of $[N(i)]$ for $i=0,1,2,3,8,9,10,11$ and two copies
of $[N(i)]$ for $i=4,5,6,7$.
Over an algebraic closure of $EL$, each summand $N(i)$ becomes the sum of the Tate motives $\F_2(j)$ with
$j=i,i+1,i+2,i+3$;
these are indicated by the bullets inside of the ovals (the numbers on the right indicating the shifts
of the corresponding Tate motives).
The connection lines between the ovals are there to show that the connected ovals are inside of the same indecomposable summand
over a smaller field: the vertical lines are the connections coming from the field $L$, while the others are from the field $E$
with the straight lines coming from $\WR N$ due to the induction hypothesis and the curved lines coming from
$\cores N$.
Note that the identification mentioned above (which arises from the Krull-Schmidt principle) does not preserves connections.
So, it is a lucky coincidence that all pairs of isomorphic summands in each of two decompositions are $E$-connected
(otherwise we would not be able to decide
which of two isomorphic summands to use when drawing the $L$-connections;
because of the coincidence the choice does not matter).
Since all the ovals turn out to be connected by (a chain of) connection lines, and $F$ is a subfield of both $L$ and $E$,
the motive of the $F$-variety $\WR X$ is indecomposable.

We come to the proof of Theorem \ref{thm1} now:

\begin{proof}[Proof of Theorem \ref{thm1}]
We fix the integer $i\geq0$ and
we induct on $n\geq i$.
For $n=i$ the statement is trivial.
Below we are assuming that $n>i$ and that the statement (with the fixed $i$) holds for
all fields and all central division algebras
of degree $2^{n-1}$.

Let $U$ be the upper (see Section \ref{Some motivic decompositions})
indecomposable motivic summand of the variety $X(2^i;D)$.
Since this variety is $2$-incompressible, the summand $U$ is lower
(see Section \ref{Some motivic decompositions}).
Note that by Lemma \ref{R(sum)}, $\WR U$ is a motivic summand of the variety $\WR X(2^i;D)$.
Let $M$ be a summand of $\WR U$ which is upper as a motivic summand of $\WR X(2^i;D)$.
It suffices to show that $M$ is lower.

Our proof is illustrated by Figure 2 right after the end of the proof.
An explanation of the illustration is given in the end of the proof.

The $L$-variety $\big(\WR X(2^i;D)\big)_L$ is isomorphic to $X(2^i;D)\times X(2^i;\sigma D)$,
and the conjugate algebra $\sigma D$ is opposite to $D$.
Therefore the upper indecomposable motivic summand of this variety is isomorphic to $U$
and the lower indecomposable motivic summand of this variety is isomorphic to
$U\big(\dim X(2^i;D)\big)=U\big(2^i(2^n-2^i)\big)$.
Since the summand $M$ is upper, $M_L$ contains $U$.
Our aim is to show that $M_L$ contains $U\big(2^i(2^n-2^i)\big)$.

As in the proof of Theorem \ref{thm2}, we write
$E$ for the function field of the variety $\WR X(2^{n-1},D)$ and we are working with the diagram
of fields
$$
\xymatrix
@=10pt
{
&EL\ar@{-}[rd]\ar@{-}[ld]&\\
L\ar@{-}[rd]&&E\ar@{-}[ld]\\
&F&
}
$$
The $EL$-algebra $D_{EL}$ is isomorphic to the algebra of
($2\times2$)-matrices over a central division
$EL$-algebra $C$ of degree $2^{n-1}$.
By Lemma \ref{iso deco},
the motive $U_{EL}$ decomposes as
$V\oplus V(2^{i+n-1})\oplus\dots$, where $V$ is the upper motive of the variety
$X(2^i;C)$ and $\dots$ stands for a sum of the upper motives of varieties $X(2^j;C)$ with
$j<i$.
By Lemma \ref{pro deco}, if $i<n-1$, the
tensor product $V\otimes V$ decomposes as $V\oplus V(d)\oplus ?\oplus\dots$,
where $d=\big(2^i(2^{n-1}-2^i)\big)$,
$?$ is a sum of $V(j)$ with $j\in[1,\;d-1]$, and where $\dots$ stands
for a sum of the upper motives of varieties $X(2^j;C)$ with $j<i$.
Therefore the complete motivic decomposition of $(\WR U)_{EL}$
looks as
\begin{multline*}
V\oplus V(d)
\oplus V(2^{i+n-1}) \oplus V(2^{i+n-1}+d)
\oplus V(2^{i+n-1}) \oplus V(2^{i+n-1}+d)
\\
\oplus V(2^{i+n})\oplus V(2^{i+n}+d)
\oplus ?\oplus\dots.
\end{multline*}
Here $?$ is a sum of $V(j)$ with $j$ in the (disjoint) union of the intervals
$$
[1,\;d-1]\cup[2^{i+n-1}+1,\;2^{i+n-1}+d-1]\cup[2^{i+n}+1,\;2^{i+n}+d-1].
$$
(The three intervals are pairwise disjoint because $d<2^{i+n-1}$.)
And, as before, $\dots$ stands for a sum of the upper motives of varieties $X(2^j;C)$ with $j<i$.

In the case of $i=n-1$, we have $d=0$, $V$ is the Tate motive $\F_2$, and $V\otimes V=V$.
Therefore each of the pairs of summands $V(r)$, $V(r+d)$ for $r=0,2^{i+n-1}\text{ (two times)}, 2^{i+n}$ in the complete motivic
decomposition of $(\WR U)_{EL}$ indicated above is replaced by a single $V(r)$ (and the sum $?$ is empty).

By now we know that $M_{EL}$ contains $V$ and (at least one copy of) $V(2^{i+n-1})$.

Recall  that $U_{EL}$ decomposes as $V\oplus V(2^{i+n-1})\oplus\dots$.
Therefore, by Lemma \ref{R(sum)}, $(\WR U)_E= \WR_{EL/E}U_{EL}$
decomposes as
$$
\WR V\oplus\cores (V\otimes V)(2^{i+n-1})\oplus\WR V(2^{i+n})\oplus\dots.
$$

Any upper summand of $\WR V$ is lower by the induction hypothesis.
It follows that $M_{EL}$ contains $V(d)$.
Therefore $M_{EL}$ contains (at least one copy of) $V(2^{i+n-1}+d)$.
It follows that $M_E$ contains $\cores V(2^{i+n-1}+d)$.
Therefore $M_{EL}$ contains both copies of $V(2^{i+n-1}+d)$ and it finally follows that
$M_{EL}$ contains $V(2^{i+n}+d)$ and is lower.
\end{proof}

\begin{figure*}
$$
\xymatrix@M=0pt@=13pt{
\;\;\cdot\;\;&&&\\
\dots&&&\\
\cdot&\;\;\cdot\;\;&&\\
\;\;\cdot\;\;&\dots&\;\;\cdot\;\;&\\
\dots&\cdot&\dots&\\
\cdot&\;\;\cdot\;\;&\cdot&\;\;\cdot\;\;\\
&\dots&\;\;\cdot\;\;&\dots\\
&\cdot&\dots&\cdot\\
&&\cdot&\;\;\cdot\;\;\\
&&&\dots\\
&&&\cdot
\save "1,1"."3,1"="U1"*\frm<44pt>{-}
\restore
\save "4,1"."6,1"="L1"*\frm<44pt>{-}
\restore
\save "3,2"."5,2"="U2"*\frm<44pt>{-}
\restore
\save "6,2"."8,2"="L2"*\frm<44pt>{-}
\restore
\save "4,3"."6,3"="U3"*\frm<44pt>{-}
\restore
\save "7,3"."9,3"="L3"*\frm<44pt>{-}
\restore
\save "6,4"."8,4"="U4"*\frm<44pt>{-}
\restore
\save "9,4"."11,4"="L4"*\frm<44pt>{-}
\restore
\ar@{-}"L1";"U1"
\ar@{-}"L2";"U2"
\ar@{-}"L3";"U3"
\ar@{-}"L4";"U4"
\ar@{-}"U1";"U2"
\ar@{-}"L3";"L4"
\ar@/^0.7pc/@{-}"L1";"U3"
\ar@/^0.7pc/@{-}"L2";"U4"
}
$$
\caption{Proof of Theorem \ref{thm1}}
\end{figure*}
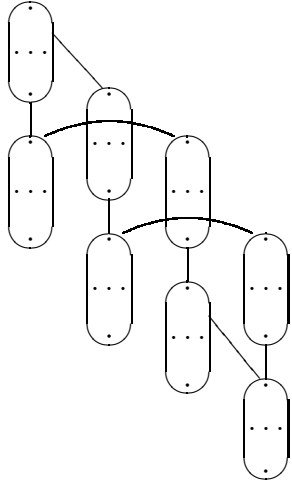

Figure 2 illustrates the proof, just finished (in the case of $d>0$).
The ovals represent some summands
in a complete motivic decomposition of the variety $(\WR X)_{EL}$:
one copy of $[V]$, $[V(d)]$, $[V(2^{i+n})]$, $[V(2^{i+n}+d)]$ and two copies of
$[V(2^{i+n-1})]$ and of $[V(2^{i+n-1}+d)]$.
None of the remaining summand of the decomposition is isomorphic to a represented one.
Note that two {\em different} complete motivic decompositions of the variety $\WR X(2^i;D)$ over the field $EL$
have been used in the proof:
one is a refinement of a complete motivic decomposition over $L$, the other over $E$.
But the sets of summands of these decompositions can be identified because of the Krull-Schmidt principle.

In contrast with Figure 1, where the summands are ``thin'', those of Figure 2 are, in general, ``thick'', that is,
contain (over an algebraic closure of $EL$) several copies of the Tate motives with a same shift number.
Since the picture illustrates the general case, we cannot (and do not) indicate the single Tate motives inside of the ovals.
We only take care of representing a summand with a bigger shift number by a lower position oval.
The connection lines between the ovals are there to show that the connected ovals are inside of the same indecomposable summand
over a smaller field: the vertical lines are the connections coming from the field $L$, while the others are from the field $E$
with the straight lines coming from $\WR V$ due to the induction hypothesis and the curved lines coming from the
$\cores V$.
Note that the mentioned above identification (which arises from the Krull-Schmidt principle) does not preserves connections.
So again, it is a lucky coincidence that
all pairs of isomorphic summands in each of two decompositions are $E$-connected (otherwise we would not be able to decide
which of two isomorphic summands to use when drawing the $L$-connections).

Since the upper oval turns out to be connected by (a chain of) connection lines with the lower one,
the variety $\WR X$ is $2$-incompressible.

\section
{Generalizations}
\label{Generalizations}

Let $p$ be any prime, $L/F$ an arbitrary finite separable field extension, and $A$ an arbitrary central simple $L$-algebra.
For any integer $r$ with $0\leq r\leq\deg A$, the canonical $p$-dimension of $\WR_{L/F}X(r;A)$ can be
easily computed in terms of $\cdim_p\WR_{L/F}X(p^i;D)$, where $D$ the central division $L$-algebra Brauer-equivalent to the
$p$-primary part of $A$ and where $i$ runs over the non-negative integers satisfying $p^i<\deg D$:

\begin{lemma}
\label{cdp}
In the above settings, we have
$$
\cdim_p \WR_{L/F}X(r;A)=\cdim_p\WR_{L/F}X(p^i;D),
$$
where $i=\min\{v_p(r),\;v_p(\deg D)\}$ with $v_p(\cdot)$ standing for the $p$-adic order.
\end{lemma}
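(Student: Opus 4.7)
My plan is to deduce the equality by establishing $p$-equivalence between $\WR_{L/F} X(r;A)$ and $\WR_{L/F} X(p^i;D)$ as $F$-varieties, using the known $p$-equivalence of $X(r;A)$ and $X(p^i;D)$ over $L$ (after \cite{upper}). The base case asserts that for any $L$-field $K'$, (i) an $X(r;A)(K')$-point produces an $X(p^i;D)(K')$-point, and (ii) an $X(p^i;D)(K')$-point produces an $X(r;A)$-point over some finite extension of $K'$ of degree coprime to $p$. Both follow from the criterion $X(s;B)(K')\neq\emptyset \iff \ind B_{K'}\mid s$, the identity $\ind D_{K'}=(\ind A_{K'})_p$, and the assumptions $p^i\mid r$ and $i\leq v_p(\deg D)$; in (ii), the $p'$-part of $\ind A_{K'}$ is reduced via a finite $p$-coprime extension realized as the function field of an appropriate generalized Severi-Brauer variety of the $p'$-primary part of $A$.

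I would then transfer this equivalence via the Weil transfer universal property $\WR_{L/F}X(s;B)(E)=X(s;B)(L\otimes_F E)$. Taking $E=F(\WR X(r;A))$ and writing $L\otimes_F E=\prod_\alpha E_\alpha$ for its decomposition into field factors, the tautological $E$-point supplies $X(r;A)$-points on every $E_\alpha$; by (i), also $X(p^i;D)$-points on every $E_\alpha$; whence an $E$-rational point of $\WR X(p^i;D)$. This yields $\cdim_p \WR X(p^i;D)\leq\cdim_p \WR X(r;A)$. For the reverse inequality I would take $E=F(\WR X(p^i;D))$: the tautological point gives $X(p^i;D)$-points on each factor $E_\alpha$, and (ii) supplies per-factor $p$-coprime extensions over which $X(r;A)$ is rational. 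I would realize these simultaneously through a single auxiliary $F$-variety---namely $\WR Y$ for $Y$ a generalized Severi-Brauer variety of the $p'$-primary part of $A$---so that the product $\WR X(p^i;D)\times_F \WR Y$ yields a point of $\WR X(r;A)$ over a $p$-coprime extension of $E$.

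The main obstacle is this reverse direction: a naive per-factor aggregation of the extensions $E_\alpha'/E_\alpha$ picks up a factor of $[L:F]$, which may be divisible by $p$. The resolution is to route the index reduction through a Weil transfer of a $p'$-primary variety (whose relevant numerical invariants inherit $p$-coprimality from the $p'$-primary part of $A$), combined with the invariance of canonical $p$-dimension under $p$-coprime base change (\cite[Proposition 1.5]{ASM-ed}), so that the auxiliary variety does not affect $\cdim_p\WR X(r;A)$.
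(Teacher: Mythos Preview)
Your overall strategy coincides with the paper's: show that each Weil transfer acquires a zero-cycle of $p$-coprime degree over the function field of the other. The paper writes out the forward direction essentially as you do and then dismisses the reverse direction with the single word ``similarly'', so your outline is already more explicit than the original.

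Your diagnosis of the reverse direction is right, but your proposed cure inherits the disease. You need the auxiliary $F$-variety $\WR Y$ (with $Y=X(1;B)$, $B$ the division algebra Brauer-equivalent to the $p'$-part of $A$) to carry a zero-cycle of $p$-coprime degree over $F$; the phrase ``relevant numerical invariants inherit $p$-coprimality'' does not supply one. The obvious candidate---push forward a $p$-coprime zero-cycle from $(\WR Y)_L\simeq\prod_\sigma\sigma Y$---again picks up the factor $[L:F]$, which is precisely the obstacle you flagged. What is missing is a zero-cycle on $\WR Y$ constructed \emph{directly over $F$}. One clean choice is the top Chern class $c_{\dim\WR Y}(T_{\WR Y})\in\CH_0(\WR Y)$: its degree equals the topological Euler characteristic of $(\PP^{\,m-1})^{[L:F]}$, namely $m^{[L:F]}$ with $m=\deg B$, and this is prime to $p$. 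With that in hand your plan works: the rational map $(\WR Y)_E\dashrightarrow(\WR X(r;A))_E$ over $E=F(\WR X(p^i;D))$ pushes this $p$-coprime zero-cycle to $(\WR X(r;A))_E$, yielding the required $p$-coprime closed point. (A minor point: \cite[Proposition~1.5]{ASM-ed} is stated for finite $p$-coprime field extensions; what you actually use is the standard consequence that passing to the function field of a variety carrying a $p$-coprime zero-cycle does not change $\cdim_p$.)
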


\begin{proof}
The variety $\WR_{L/F}X(p^i;D)$ has a point over the function field of
$\WR_{L/F}X(r;A)$:
\begin{multline*}
\Big(\WR_{L/F}X(p^i;D)\Big)\Big(F\big(\WR_{L/F}X(r;A)\big)\Big)=\\
X(p^i;D)\Big(L\big(X(r;A)\big)\otimes_LL\big(X(r;\sigma A)\big)\Big)\supset
X(p^i;D)\Big(L\big(X(r;A)\big)\Big)\ne\emptyset
\end{multline*}
because $\ind D_{L(X(r;A))}$ divides $p^i$.
Similarly, the variety $\WR_{L/F}X(r;A)$ has a point over a finite $p$-coprime extension of the function field of
$\WR_{L/F}X(p^i;D)$.
\end{proof}

We turn back to the case of $p=2=[L:F]$ in order to remove the norm condition of Theorems \ref{thm1} and \ref{thm2}
in the case of a usual Severi-Brauer variety.
Note that the function field $F(Y)$ in the following statement is a (generic) splitting field of the norm algebra of $D$:

\begin{prop}
\label{no norm}
Let $L/F$ be a quadratic separable field extension,
$D$ a $2$-primary central division $L$-algebra, $X=\WR_{L/F}X(1;D)$, and $Y=X(1;N_{L/F}D)$.
Then $\cdim_2 X=\cdim_2 X_{F(Y)}$.
\end{prop}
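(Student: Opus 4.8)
The plan is to prove the two inequalities $\cdim_2 X\ge\cdim_2 X_{F(Y)}$ and $\cdim_2 X\le\cdim_2 X_{F(Y)}$ separately. The first one is automatic: canonical $2$-dimension can only drop under a field extension, since a rational self-map of $X$ with small image base-changes to one of $X_{F(Y)}$, or more directly by \cite[Proposition 1.5]{ASM-ed} applied together with the standard monotonicity of $\cdim_2$ under scalar extension. So the whole content is the reverse inequality, and I would reformulate it motivically: by the discussion in Section \ref{Some motivic decompositions}, $\cdim_2 X$ equals $\dim X$ minus (a shift-normalized) ``length'' of the upper motive $M_X$ of $X$; equivalently, $X$ and $X_{F(Y)}$ have the same $\cdim_2$ as soon as the upper indecomposable $\F_2$-motive $M_{X_{F(Y)}}$ of $X_{F(Y)}$, viewed back over $F$, is again upper \emph{and} has the same lower shift, i.e. the complete decompositions of $(M_X)_{\bar F}$ and of $(M_{X_{F(Y)}})_{\overline{F(Y)}}$ occupy the same range of Tate twists. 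Thus the goal becomes: the passage from $F$ to $F(Y)$ does not shrink the upper motive of $\WR_{L/F}X(1;D)$.

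The key step is to reduce this to a statement over the smaller extensions and invoke Lemma \ref{R(sum)} and the induction already carried out for Theorem \ref{thm2}. First observe that $F(Y)$ is a generic splitting field of $N_{L/F}D$; over $F(Y)$ the norm algebra becomes trivial, so Theorems \ref{thm1} and \ref{thm2} apply to $X_{F(Y)}$, and in particular $M_{X_{F(Y)}}$ is self-dual with the full expected length $\dim X$. So I would show that $(M_X)_{F(Y)}$ contains an upper self-dual summand of that same length, whence by Krull--Schmidt (valid with $\F_2$-coefficients for projective homogeneous varieties) it \emph{is} $M_{X_{F(Y)}}$, and then reversing the role of $F$ and $F(Y)$ gives the equality of the two canonical $2$-dimensions. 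To produce such a summand I would run exactly the decomposition argument of the proof of Theorem \ref{thm2}: set $E'=F(\WR_{L/F}X(2^{n-1};D))$ (the same auxiliary field as there), base change to $E'$, $L$, and $E'L$, apply $\WR$-additivity (Lemma \ref{R(sum)}), the $(2\times2)$-matrix splitting over $E'L$, the projective-bundle decomposition of $N\otimes N'$, and the fact that $\cores$ preserves indecomposability \cite{outer}; the ``connectedness of the ovals'' diagram chase from Figure 1 shows that every Tate twist from $0$ to $\dim X$ is hit by the upper summand $M$ of $\WR U$ already over $F$, \emph{independently of the norm condition}, because none of those field extensions or motivic decompositions used the triviality of $N_{L/F}D$ except at the very base case. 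The only place the norm hypothesis entered in Theorem \ref{thm2} was to identify $\sigma D$ with $D^{\mathrm{op}}$; here I replace that input by passing to $F(Y)$, where it becomes available, and track that the extra Tate twists thereby obtained over $F(Y)$ were in fact already present over $F$ by the connectedness argument.

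The main obstacle I expect is the bookkeeping of ``which summands over $E'L$ are connected over $F$ versus only over $F(Y)$'': one must check that the chain of $L$-connections and $E'$-connections that ties all of $M(X)(0),\dots,M(X)(\dim X)$ together does not secretly require $\sigma D\cong D^{\mathrm{op}}$ at any intermediate stage — only at $n=0$. Concretely, the induction hypothesis should be reformulated as ``for all central division $L$-algebras $D$ of degree $2^{n-1}$ (with \emph{no} norm assumption), $\cdim_2\WR_{L/F}X(1;D)=\cdim_2\WR_{L/F}X(1;D)_{F(X(1;N_{L/F}D))}$'', and one has to verify that the base change to $E'$, which replaces $D$ by the degree-$2^{n-1}$ algebra $C$ over $E'L$, is compatible with simultaneously base-changing the generic splitting field $Y$; i.e. $E'(Y_{E'})$ is again a generic splitting field of $N_{E'L/E'}C$, so the induction applies to $C$. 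Once that compatibility is in place, the diagram chase is identical to Figure 1 and the proof closes.
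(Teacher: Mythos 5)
Your plan breaks down at its very first substantive claim. You write that over $F(Y)$ the norm algebra becomes trivial, \emph{so} Theorems \ref{thm1} and \ref{thm2} apply and $M_{X_{F(Y)}}$ ``is self-dual with the full expected length $\dim X$.'' But Theorem \ref{thm1} requires the $L$-algebra to be \emph{division}, and $D_{L(Y)}$ need not remain division: $Y_L\simeq X(1;D\otimes_L\sigma D)$, so passing to $L(Y)$ can strictly reduce $\ind D$. When that happens, $X_{F(Y)}$ is \emph{not} $2$-incompressible; by Lemma \ref{cdp} one has $\cdim_2 X_{F(Y)}=\dim\WR X(1;E)<\dim X$, where $E$ is the underlying division algebra of $D_{L(Y)}$. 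The proposition is precisely designed to be correct in this situation — it is not claiming $X$ is $2$-incompressible — so a proof that begins by assuming $M_{X_{F(Y)}}$ is lower of length $\dim X$ is proving a false intermediate statement.

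The second gap is the claim that the norm hypothesis ``only'' entered the proof of Theorem \ref{thm2} to identify $\sigma D$ with $D^{\mathrm{op}}$, and that one can ``replace that input by passing to $F(Y)$'' and then ``track that the extra Tate twists were already present over $F$.'' This tracking is exactly the content that needs a proof and is not supplied. The vertical ($L$-)connections in Figure~1 come from the projective-bundle decomposition $M(X\times\sigma X)\simeq\bigoplus M(X)(i)$, which uses $[\sigma D]\in\langle[D]\rangle$; if $N_{L/F}D$ is nontrivial, this decomposition and hence those connections simply do not exist over $L$, only over $L(Y)$. Without them the diagram chase does not connect $M(X)(0)$ to $M(X)(\dim X)$ over $F$, and there is no mechanism in your sketch to recover that connectivity. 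The paper's actual proof avoids both difficulties by a quite different device: using the Schofield--Van den Bergh index-reduction formula to construct an auxiliary division algebra $\tilde D$, Brauer-equivalent to a twisted power $D^{\otimes(i+1)}\otimes(\sigma D)^{\otimes i}$, which \emph{does} remain division over the generic splitting field of its own norm algebra. One then compares $\cdim_2$ of $X$ and of $\tilde X=\WR X(1;\tilde D)$ (equal over every extension because each has a point over the other's function field), applies Theorem \ref{thm1} to $\tilde X$ over $F(\tilde Y)$, and finishes by noting $F(\tilde Y)(Y)/F(\tilde Y)$ is purely transcendental. Your approach, as written, cannot be repaired without essentially importing this index-reduction step to first replace $D$ by an algebra that survives the base change.
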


\begin{proof}
Note that $X_{F(Y)}\simeq\WR_{L(Y)/F(Y)}X(1;D_{L(Y)})$ and $Y_L\simeq X(1;D\otimes_LD')$,
where $D'$ is the conjugate algebra $\sigma D$.
%
According to the index reduction formula of \cite{MR1061778},
$\ind D_{L(Y)}$ is equal to the minimum of $\ind\big(D^{\otimes (i+1)}\otimes (D')^{\otimes i}\big)$
where $i$ runs over the integers.
Let $i$ be an integer which gives the minimum and let
$\tilde{D}$ be a central division $L$-algebra Brauer-equivalent to the product
$D^{\otimes (i+1)}\otimes (D')^{\otimes i}$.
We set $\tilde{X}=\WR X(1;\tilde{D})$.
Since the algebra $\tilde{D}':=\sigma\tilde{D}$ is Brauer-equivalent to the product
$(D')^{\otimes (i+1)}\otimes D^{\otimes i}$ and
the exponent of $D$ (coinciding with the exponent of $D'$) is a power of $2$,
the classes of $\tilde{D}$ and $\tilde{D'}$ in $\Br(L)$ generate the same subgroup
as the classes of $D$ and $D'$.
It follows that
$$
\tilde{X}\big(F(X)\big)\ne\emptyset\ne X\big(F(\tilde{X})\big).
$$
Therefore, for any field extension $E/F$, we have $\cdim_2\tilde{X}_E=\cdim_2 X_E$.
On the other hand, for $\tilde{Y}=X(1;N \tilde{D})$, the algebra
$\tilde{D}_{L(\tilde{Y})}$ is division.
Consequently, by Theorem \ref{thm1}, the variety $\tilde{X}_{F(\tilde{Y})}$
is $2$-incompressible.
Therefore the variety $\tilde{X}$ is also $2$-incompressible,
$\cdim_2 \tilde{X}=\dim \tilde{X}=\cdim_2 \tilde{X}_{F(\tilde{Y})}$,
and we obtain that $\cdim_2 X=\cdim_2 X_{F(\tilde{Y})}$.
Since the norm algebra $ND$ becomes trivial over $F(\tilde{Y})$,
the field extension $F(\tilde{Y})(Y)/F(\tilde{Y})$ is purely transcendental, and
the required statement follows.
\end{proof}


\begin{thebibliography}{10}

\bibitem{MR2183253}
{\sc Berhuy, G., and Reichstein, Z.}
\newblock On the notion of canonical dimension for algebraic groups.
\newblock {\em Adv. Math. 198}, 1 (2005), 128--171.

\bibitem{MR2178658}
{\sc Brosnan, P.}
\newblock On motivic decompositions arising from the method of {B}ia\l
  ynicki-{B}irula.
\newblock {\em Invent. Math. 161}, 1 (2005), 91--111.

\bibitem{MR2264459}
{\sc Chernousov, V., and Merkurjev, A.}
\newblock Motivic decomposition of projective homogeneous varieties and the
  {K}rull-{S}chmidt theorem.
\newblock {\em Transform. Groups 11}, 3 (2006), 371--386.

\bibitem{MR2110630}
{\sc Chernousov, V., Gille, S., and Merkurjev, A.}
\newblock Motivic decomposition of isotropic projective homogeneous varieties.
\newblock {\em Duke Math. J. 126}, 1 (2005), 137--159.

\bibitem{EKM}
{\sc Elman, R., Karpenko, N., and Merkurjev, A.}
\newblock {\em The algebraic and geometric theory of quadratic forms}, vol.~56
  of {\em American Mathematical Society Colloquium Publications}.
\newblock American Mathematical Society, Providence, RI, 2008.

\bibitem{MR1356536}
{\sc Karpenko, N.~A.}
\newblock Grothendieck {C}how motives of {S}everi-{B}rauer varieties.
\newblock {\em Algebra i Analiz 7}, 4 (1995), 196--213.

\bibitem{MR1758562}
{\sc Karpenko, N.~A.}
\newblock Cohomology of relative cellular spaces and of isotropic flag
  varieties.
\newblock {\em Algebra i Analiz 12}, 1 (2000), 3--69.

\bibitem{MR1809664}
{\sc Karpenko, N.~A.}
\newblock Weil transfer of algebraic cycles.
\newblock {\em Indag. Math. (N.S.) 11}, 1 (2000), 73--86.

\bibitem{upper}
{\sc Karpenko, N.~A.}
\newblock Upper motives of algebraic groups and incompressibility of
  {S}everi-{B}rauer varieties.
\newblock Linear Algebraic Groups and Related Structures (preprint server) 333
  (2009, Apr 3, revised: 2009, Apr 24), 18 pages.

\bibitem{hypernew}
{\sc Karpenko, N.~A.}
\newblock Hyperbolicity of orthogonal involutions.
\newblock {\em Doc. Math. Extra Volume: Andrei A. Suslin's Sixtieth Birthday\/}
  (2010), 371--389 (electronic).

\bibitem{outer}
{\sc Karpenko, N.~A.}
\newblock Upper motives of outer algebraic groups.
\newblock In {\em Quadratic forms, linear algebraic groups, and cohomology},
  vol.~18 of {\em Dev. Math.} Springer, New York, 2010, pp.~249--258.

\bibitem{MR2258262}
{\sc Karpenko, N.~A., and Merkurjev, A.~S.}
\newblock Canonical {$p$}-dimension of algebraic groups.
\newblock {\em Adv. Math. 205}, 2 (2006), 410--433.

\bibitem{MR2393078}
{\sc Karpenko, N.~A., and Merkurjev, A.~S.}
\newblock Essential dimension of finite {$p$}-groups.
\newblock {\em Invent. Math. 172}, 3 (2008), 491--508.

\bibitem{MR1632779}
{\sc Knus, M.-A., Merkurjev, A., Rost, M., and Tignol, J.-P.}
\newblock {\em The book of involutions}, vol.~44 of {\em American Mathematical
  Society Colloquium Publications}.
\newblock American Mathematical Society, Providence, RI, 1998.
\newblock With a preface in French by J.\ Tits.

\bibitem{ASM-ed}
{\sc Merkurjev, A.~S.}
\newblock Essential dimension.
\newblock In {\em Quadratic Forms -- Algebra, Arithmetic, and Geometry},
  vol.~493 of {\em Contemp. Math.} Amer. Math. Soc., Providence, RI, 2009,
  pp.~299--326.

\bibitem{MR1061778}
{\sc Schofield, A., and Van~den Bergh, M.}
\newblock The index of a {B}rauer class on a {B}rauer-{S}everi variety.
\newblock {\em Trans. Amer. Math. Soc. 333}, 2 (1992), 729--739.


\bibitem{maksim}
{\sc Zhykhovich, M.}
\newblock Motivic decomposability of generalized {S}everi-{B}rauer varieties.
\newblock {\em C. R. Math. Acad. Sci. Paris 348}, 17-18 (2010), 989--992.
\end{thebibliography}

\def\cprime{$'$}

\end{document}